\DeclareRobustCommand{\SkipTocEntry}[5]{}}{%
\DeclareRobustCommand{\SkipTocEntry}[4]{}}
\theoremstyle{plain}
\newtheorem{thm}{Theorem}[section]
\newtheorem{prop}[thm]{Proposition}
\newtheorem{lem}[thm]{Lemma}
\newtheorem{conj}[thm]{Conjecture}
\theoremstyle{definition}
\newtheorem{defn}[thm]{Definition}
\newtheorem{ex}[thm]{Example}
\newtheorem{exes}[thm]{Examples}
\newtheorem{rem}[thm]{Remark}
\newcommand{\lin}{{\mathrm{lin}}}
\newcommand{\aff}{{\mathrm{aff}}}
\DeclareMathOperator{\conv}{conv}
\DeclareMathOperator{\codeg}{codeg}
\newcommand{\R}{{\mathds{R}}}
\newcommand{\Z}{{\mathds{Z}}}
\newcommand{\rleft}{\mathopen{}\mathclose\bgroup\left}
\newcommand{\rright}{\aftergroup\egroup\right}
\newcommand{\pro}[2]{\langle #1, #2 \rangle}
\title[Proof of a conjecture of Batyrev and Juny on Gorenstein polytopes]{Proof of a conjecture of Batyrev and Juny \\ on Gorenstein polytopes}
\author[B.\,Nill]{Benjamin Nill}
\address{Fakult\"at f\"ur Mathematik\\OVGU Magdeburg\\Universit\"atsplatz 2\\39106 Magdeburg\\Germany}
\email{benjamin.nill@ovgu.de}
\subjclass[2010]{Primary: 52B20}
\keywords{Lattice polytopes, Gorenstein polytopes, reflexive polytopes, Cayley polytopes, Minkowski sums, degree}
\begin{document}

\selectlanguage{english}

\begin{abstract}
A $d$-dimensional lattice polytope $P$ is Gorenstein if it has a multiple $r P$ that is a reflexive polytope up to translation by a lattice vector. The difference $d+1-r$ is called the degree of $P$. 
We show that a Gorenstein polytope is a lattice pyramid if its dimension is at least three times its degree. 
This was previously conjectured by Batyrev and Juny. We also present a refined conjecture and prove it for IDP Gorenstein polytopes.
\end{abstract}

\dedicatory{This paper is dedicated to Victor V. Batyrev on the occasion of his 60th birthday.}

\maketitle{}

\section{Notation and basic notions}
\label{sec:notation}

This section may be skipped on a first reading.

\subsection{Lattice polytopes and lattice pyramids}

For a subset $A \subseteq \R^d$ we denote by $\conv(A)$, $\lin(A)$, $\aff(A)$ its convex, linear, affine hull respectively. Given $A,B \subseteq \R^d$, their {\em Minkowski sum} is $A+B := \{a + b \,:\, a \in A,\, b \in B\}$. Let us recall that a {\em lattice polytope} $P \subset \R^d$ is the non-empty convex hull of finitely many elements of the lattice $\Z^d$. Two lattice polytopes are {\em isomorphic} or {\em unimodularly equivalent} if they are mapped to each other by an affine lattice-preserving transformation. The $d$-dimensional {\em standard simplex} is given as $\Delta_d := \conv(0,e_1,\ldots,e_d)$, where $0$ denotes the origin of $\R^d$ and $e_1, \ldots, e_d$ the standard basis vectors. 
A lattice polytope is called {\em hollow} if it does not contain a lattice point in its relative interior. A lattice point is by convention not hollow. We denote a lattice polytope $P \subset \R^d$ as a {\em lattice pyramid} if $P$ is a lattice point or if there is a lattice polytope $P' \subset \R^{d-1}$ such that $P \cong \conv(P' \times \{0\}, \{0\} \times \{1\}) \subset \R^{d-1} \times \R$. For instance, $\Delta_d$ is a lattice pyramid. Note that in this paper, by definition, lattice points are considered to be lattice pyramids.

\subsection{Lattice distance}

Given the facet $F$ of a full-dimensional lattice polytope $P \subset \R^d$ and a point $x \in \R^d$, the {\em lattice distance of $F$ from $x$} is defined as the difference $\pro{u}{F}-\pro{u}{x}$, where $u$ is the unique primitive integral linear functional on $\R^d$ that evaluates constantly on $F$. We say, the lattice distance {\em is given by $u$}. For a lower-dimensional lattice polytope $P \subset \R^d$ the lattice distance of a facet $F$ from $x \in \aff(P)$ is defined with respect to the affine lattice of lattice points in its affine hull. In this case, $u$ may be taken as a primitive integral affine functional on $\aff(P)$, where primitive is equivalent to the existence of two lattice points whose evaluations with $u$ differ only by~$1$. Hence, a facet $F$ of a lattice polytope $P$ has {\em lattice distance one} from $x \in \aff(P)$ if and only if there exists some integral affine functional $u$ on $\aff(P)$ that evaluates constantly on $F$ and satisfies $\pro{u}{F}-\pro{u}{x}=1$.

\subsection{Degree and codegree}

The {\em codegree} of a $d$-dimensional lattice polytope $P$ is the smallest positive integer $r$ such that $rP$ is not hollow. The difference $s := d+1-r$ is called the {\em degree} of $P$, we refer to \cite{BN07,BR15}. The degree of a lattice polytope should be seen as a measure for the complexity of $P$ that is more refined than the dimension $d$. One has $0 \le s \le d$, with $s=0$ if and only if $P \cong \Delta_d$, and $s=d$ if and only if $P$ is not hollow. Its importance stems from its interpretation in Ehrhart theory as the degree of the enumerator polynomial (the so-called {\em $h^*$-polynomial}) of the rational generating function of its Ehrhart polynomial. 

\subsection{Reflexive and Gorenstein polytopes}

A $d$-dimensional lattice polytope $P \subset \R^d$ is {\em reflexive} if there is an interior lattice point $x$ such that every facet has lattice distance one from $x$. In this case, $x$ is the unique interior lattice point of $P$. Note that throughout this paper, reflexive polytopes are {\em not} assumed to have the origin $0$ as the unique interior lattice point (as is usually required in the literature). We say, a lattice polytope $P$ is {\em Gorenstein} if there is some $r \in \Z_{\ge 1}$ such that $r P$ is reflexive. In this case, $r$ is uniquely determined and often called the {\em index} of $P$. It is equal to the codegree of $P$. A lattice polytope is Gorenstein if and only if its $h^*$-polynomial is palindromic \cite{Hibi92}. Let us just shortly point out that reflexive and Gorenstein polytopes have a beautiful duality property and turn up naturally in toric geometry and combinatorial commutative algebra. The original motivation for their intensive studies over the previous decades stems from their significance in constructing mirror-symmetric Calabi-Yau complete intersections in Gorenstein toric Fano varieties and the computation of their stringy Hodge numbers. We refer to \cite{Bat94, BB94,BB96,BN08} for more on this. 

\section{The main results}

\subsection{The Batyrev-Juny conjecture}

The classification of lattice polytopes of given degree $s$ is an active subject of study in the area of Ehrhart theory and lattice polytopes. We refer to the introduction in \cite{HN21} for pointers to the literature. Note that for fixed degree the dimension of a lattice polytope can still be arbitrarily large. Lattice polytopes of degree $s\le1$ were completely classified by Batyrev and the author \cite{BN08}. In \cite{BJ10} Batyrev and Juny managed to give a complete list of all Gorenstein polytopes of degree $\le 2$, thereby providing a complete classification of Gorenstein toric Del Pezzo varieties in arbitrary dimension. Let us explain why theoretically such a result is possible for Gorenstein polytopes $P$ of {\em any} fixed degree $s$. For this, recall that lattice pyramids over Gorenstein polytopes are again Gorenstein polytopes of the same degree. Now, it follows from \cite[Cor.~3.2]{HNP09} and \cite{Bat06} that there is a function $d(s)$ such that if $\dim(P) \ge d(s)$, then $P$ is a lattice pyramid. As in each dimension there are only a finite number of reflexive polytopes \cite{Bat94} (up to isomorphisms), and thus Gorenstein polytopes, this shows that there is only a {\em finite} number of non-lattice-pyramid Gorenstein polytopes of fixed degree $s$ (but arbitrary dimension). Therefore, finding a sharp value of $d(s)$ is important for future classification results of Gorenstein polytopes and their associated toric varieties. However, simply going through the proofs in \cite{HNP09} and \cite{Bat06} would yield a bound that is at least doubly-exponential in $s$. Based upon the existing classification results of Gorenstein polytopes for $s\le 2$,  Batyrev and Juny conjectured in \cite[Conjecture~0.1]{BJ10} that $d(s)=3s$. Here, we prove this conjecture.

\begin{thm}Let $P$ be a Gorenstein polytope of dimension $d$ and degree $s$. If $d \ge 3s$, then $P$ is a lattice pyramid.
\label{BJ-conj}
\end{thm}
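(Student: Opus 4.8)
The plan is to reformulate the conclusion and then to build the apex of the pyramid out of the reflexive dilate. The guiding observation is that a lattice polytope is a lattice pyramid exactly when it is a Cayley polytope one of whose factors is a single lattice point: if $P \cong \operatorname{Cayley}(P_0, \dots, P_k)$ with, say, $P_0$ a lattice point, then $P_0$ sits at lattice distance one over the affine hull of the facet $\operatorname{Cayley}(P_1, \dots, P_k)$, which contains every other vertex, so $P$ is a pyramid; conversely a pyramid is $\operatorname{Cayley}(\mathrm{pt}, \mathrm{base})$. Hence it suffices to exhibit, in some Cayley decomposition of $P$, a factor that is a lattice point. Setting $r := \codeg(P) = d + 1 - s$, I fix the reflexive polytope $rP$ and translate its unique interior lattice point to the centre; the primitive facet functionals $u_F$ of $rP$ then also describe $P$, each facet of $P$ lying at lattice distance $1/r$ from the centre. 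The hypothesis $d \ge 3s$ reads $r \ge 2s + 1$, so $P$ is extremely hollow, and it is this hollowness that should force a Cayley degeneration.

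The first main step is to produce a Cayley decomposition $P \cong \operatorname{Cayley}(P_0, \dots, P_k)$ whose fibre $P_0 + \dots + P_k$ has dimension exactly $s$, equivalently with $k = d - s$ factors and no excess codegree in the fibre direction. One inequality is elementary: the simplex base $\Delta_k$ already contributes codegree $k+1$, so every Cayley decomposition satisfies $\codeg(P) \ge k + 1$ and thus $k \le d - s$, with fibre dimension at least $s$. The content is the reverse, namely that the Gorenstein structure lets one attain a decomposition with fibre dimension $s$ and no slack. I would extract this directly from the reflexivity of $rP$ and the palindromicity of the $h^*$-polynomial, which ties the lattice points appearing at the lowest dilates to those at the highest and thereby pins down how the lattice points of $P$ distribute over the simplex directions. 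This replaces the route through \cite[Cor.~3.2]{HNP09} and \cite{Bat06}, which only yields a doubly-exponential bound.

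The second main step uses reflexivity to bound the number of factors once none of them is a point. Assume $P$ is not a lattice pyramid; then, by the first paragraph, no Cayley decomposition has a point factor, so in the decomposition above all $k+1$ factors are positive-dimensional. Here the Gorenstein hypothesis is indispensable and must be used in full: the polytope $\Delta_1 \times \Delta_{m-1} = \operatorname{Cayley}(\Delta_1, \dots, \Delta_1)$ is a non-pyramid of degree $1$ in every dimension $m$, and only reflexivity of $rP$, equivalently palindromicity, excludes it, since three parallel segments already make $h^*$ non-palindromic. I would show that palindromicity forces the positive-dimensional factors to pair up along the fibre, so that at most two of them align with each fibre direction, giving $k + 1 \le 2s$. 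Since $d = k + s$, this yields $d \le 3s - 1$, contradicting $d \ge 3s$; hence a point factor exists and $P$ is a pyramid. The unit square $[0,1]^2 = \operatorname{Cayley}(\Delta_1, \Delta_1)$, a non-pyramid Gorenstein polytope of degree $1$, saturates both inequalities in dimension $3s - 1 = 2$, which is reassuring evidence that the constant $3$ is correct.

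The hard part is the first step: obtaining the sharp Cayley decomposition with fibre dimension exactly $s$. The mere existence of a Cayley structure is already the difficult Batyrev--Nill Cayley conjecture, and here I need the quantitatively sharp version for Gorenstein polytopes, extracted from the reflexive geometry rather than from the weak general bounds. The second step, the \enquote{at most two positive-dimensional factors per fibre direction} bound, is the clean reflexive input that produces the constant $3$; it too requires care in making precise the alignment of a factor with a fibre direction and in treating higher-dimensional factors rather than just segments.
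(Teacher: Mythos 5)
Your outline stands or falls with your first step, and that step is a genuine gap: no argument known (and none sketched by you beyond invoking ``reflexivity and palindromicity'') produces a Cayley decomposition of a general Gorenstein polytope with fibre dimension \emph{exactly} $s$, i.e.\ with $l = d+1-s$ factors in $\R^s$. What is actually available is Theorem~\ref{thm:cayley} (from \cite{HNP09}), which only gives factors in $\R^n$ with $n \le 2s-1$; the sharp version you want is precisely what is known \emph{only} in the IDP case, via \cite[Cor.~2.12]{BN08} applied to the dual Gorenstein polytope --- which is exactly how the paper proves Proposition~\ref{IDP}, but not the main theorem. The paper's way around this is the idea your proposal is missing: a \emph{weighted} Cayley--Gorenstein correspondence (Proposition~\ref{prop:cayley}), which says that for any Cayley decomposition $P \cong P_1 * \cdots * P_l$ with $\dim(P_1+\cdots+P_l)=n$, the Gorenstein property of codegree $r$ is equivalent to the existence of weights $k_i \in \Z_{\ge 1}$ with $k_1+\cdots+k_l = r$ such that $k_1P_1+\cdots+k_lP_l$ is reflexive (plus a dimension condition). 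The weights transfer the full codegree $r = d+1-s$ into the fibre even when the number of factors $l$ is much smaller than $r$; note that with your unweighted count and the honest bound $n \le 2s-1$, the inequality $l \le 2n$ would only yield $d \le 5s-3$, not $3s-1$.

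Your second step is a true statement --- it is the $k_1=\cdots=k_l=1$ case of Lemma~\ref{lem:mink-bound} --- but your proposed proof mechanism (``palindromicity forces factors to pair up, at most two per fibre direction'') is not a proof and there is no indication it can be made one for higher-dimensional summands. In the paper this bound requires the nonnegativity of the mixed degree \cite[Prop.~4]{Nil20} together with the nef-partition splitting results \cite[Prop.~6.11, 6.13]{BN08}, run through an induction on $n$; the three-parallel-segments heuristic is the degenerate case, not the engine. Moreover, because the paper works with $n$ up to $2s-1$, the plain bound $\sum k_i \le 2n$ is still insufficient, and one needs the refinement that $\sum k_i \ge n+2$ forces $P$ to be \emph{reducible} (second half of Lemma~\ref{lem:mink-bound}), combined with the reduction to irreducible Gorenstein polytopes (Lemma~\ref{lem:irr}, resting on Lemma~\ref{next2}) to conclude $d+1-s \le n+1 \le 2s$, a contradiction. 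So the correct final arithmetic runs through irreducibility, a layer entirely absent from your plan; as written, your argument proves the theorem only conditionally on an open sharp-Cayley statement, which is exactly the trap the paper was constructed to avoid.
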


The proof is given in Section~3.

Let us discuss why Theorem~\ref{BJ-conj} is sharp. For this, we need the important notion of a Cayley polytope that is also essential in the proof of the conjecture.

\begin{defn}
Let $P_1, \dotsc , P_l \subset \R^n$ be lattice polytopes. We define the \emph{Cayley polytope} \linebreak $P_1 * \cdots * P_l \subset \R^{n+l}$ \emph{associated to} 
$P_1, \ldots, P_l$ as the convex hull of 
$$
(P_1 \times \{e_1\}) \cup \dots \cup (P_l \times \{e_l\})
$$
In this case one has $\dim(P_1 * \dots * P_l) = \dim(P_1 + \dots + P_l) + l-1$. 

Throughout, we will refer to a lattice polytope $P \subset \R^d$ as a \emph{Cayley polytope} if $P$ is a lattice point or $P \cong P_1* \cdots * P_l$ for some lattice polytopes $P_1, \dotsc, P_l$ with $l > 1$. Note that in this paper, by definition, lattice points are considered to be Cayley polytopes. 
\end{defn}

Clearly, lattice pyramids are a special case of Cayley polytopes. The following well-known result, e.g., \cite[Theorem~2.6]{BN08}, shows how to construct Gorenstein polytopes as Cayley polytopes of special families of lattice polytopes. It will be generalized in Proposition~\ref{prop:cayley}.

\begin{prop}
Let $P_1, \ldots, P_l$ be lattice polytopes, and let $P :=P_1 * \cdots * P_l$ be their Cayley polytope. Then $P$ is a Gorenstein polytope of codegree $l$ if and only if $P_1 + \cdots + P_l$ is reflexive.
\label{cayley-old}
\end{prop}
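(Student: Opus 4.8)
The plan is to establish both halves of the equivalence at once by analyzing the dilate $lP$ and matching its reflexivity data to that of $Q := P_1 + \cdots + P_l$. Write points of $\R^{n+l}=\R^n\times\R^l$ as $(x,y)$ with $y=(y_1,\dots,y_l)$, and note that $(x,y)\in rP$ exactly when $\sum_i y_i = r$, all $y_i\ge 0$, and $x \in y_1P_1+\cdots+y_lP_l$. Denoting the support function of a polytope $R$ by $h_R(u):=\max_{p\in R}\pro{u}{p}$, the last membership is encoded by the inequalities $\pro{u}{x}\le\sum_i y_i\,h_{P_i}(u)$; since $h_Q=\sum_i h_{P_i}$ and the weighted sum $\sum_i y_iP_i$ has the same normal fan as $Q$ whenever all $y_i>0$, the ``$x$-part'' of the boundary of $lP$ over the relative interior of $\conv(e_1,\dots,e_l)$ is governed entirely by the facet normals of $Q$. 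I would first reduce to the case that $Q$ is full-dimensional (otherwise everything below takes place relative to $\aff(Q)$ and $\aff(P)$, which is harmless since the paper's lattice distances are defined relative to affine hulls).

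First I would locate the lattice points in the relative interior of $rP$. Such a point $(x,y)$ has $y\in\Z^l_{\ge 0}$ with $\sum_i y_i=r$, and being interior forces $y_i>0$, i.e.\ $y_i\ge 1$, for every $i$. For $r<l$ this is impossible, so $rP$ is hollow and hence $\codeg(P)\ge l$ for \emph{any} Cayley polytope with $l$ factors. For $r=l$ it forces $y=\vect{1}:=(1,\dots,1)$, and the remaining strict inequalities become $\pro{u}{x}<h_Q(u)$ over the facet normals $u$ of $Q$, i.e.\ $x\in\topint(Q)$. Thus the relative interior lattice points of $lP$ are exactly the points $(x_0,\vect{1})$ with $x_0$ an interior lattice point of $Q$; in particular $\codeg(P)=l$ iff $Q$ is not hollow.

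Next I would fix such a candidate centre $v=(x_0,\vect{1})$ and compute the lattice distance from $v$ of every facet of $lP$. There are two families. A facet contained in $\{y_i=0\}$ is cut out by the primitive functional $(x,y)\mapsto y_i$, which vanishes on the facet and equals $1$ at $v$, so it lies at lattice distance one from $v$ \emph{unconditionally}. The remaining facets are swept out, as $y$ ranges over the open simplex, by the facets of $\sum_i y_iP_i$, which by the normal-fan identification are indexed by the facet normals $u$ of $Q$; the corresponding facet of $lP$ is cut out by $\phi_u(x,y):=\pro{u}{x}-\sum_i h_{P_i}(u)\,y_i$, a primitive integral functional because $u$ is a primitive facet normal of $Q$ and each $h_{P_i}(u)\in\Z$. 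Evaluating gives $\phi_u(v)=\pro{u}{x_0}-h_Q(u)$, so this facet lies at lattice distance $h_Q(u)-\pro{u}{x_0}$ from $v$ --- which is precisely the lattice distance from $x_0$ of the facet of $Q$ with outer normal $u$.

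Combining the two steps, $lP$ carries an interior lattice point $v$ all of whose facets are at lattice distance one if and only if $Q$ carries an interior lattice point $x_0$ all of whose facets are at lattice distance one; that is, $lP$ is reflexive exactly when $Q$ is reflexive. Since $P$ is Gorenstein of codegree $l$ precisely when $lP$ is reflexive (the index coinciding with the codegree), this proves both implications at once. I expect the one genuinely delicate point to be the facet enumeration in the third paragraph --- verifying that the walls $\{\phi_u=0\}$ for the facet normals $u$ of $Q$ account for exactly the facets of $lP$ other than the $\{y_i=0\}$, each of the correct dimension $(\dim Q-1)+(l-1)=\dim P-1$ --- which I would extract from the constancy of the normal fan of $\sum_i y_iP_i$ over the open simplex $\conv(e_1,\dots,e_l)$; the primitivity checks and the reduction to full-dimensional $Q$ are then routine.
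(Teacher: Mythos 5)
Your proof is correct. Note that the paper never proves Proposition~\ref{cayley-old} in isolation (it cites \cite[Thm.~2.6]{BN08}); its own argument is the special case $k_1=\cdots=k_l=1$ of the proof of Proposition~\ref{prop:cayley}. Your proposal shares that proof's skeleton --- the Cayley-trick slice $(lP)\cap\rleft(\R^n\times\{\vect{1}\}\rright)=(P_1+\cdots+P_l)\times\{\vect{1}\}$ to locate interior lattice points, and the split of the facets of $lP$ into the vertical ones in $\{y_i=0\}$ and the Cayley-type ones $F_1*\cdots*F_l$ --- but handles the crucial step by a genuinely different mechanism. For a Cayley-type facet the paper starts from its given primitive normal $u=(u',u'')$ of $rP$, compares $u'$ with the primitive normal $g$ of the sliced facet of the Minkowski sum, and runs a divisibility argument ($u'=qg$, then $q$ divides every $c_i$, then $q=1$ by primitivity), treating the two implications of the equivalence by separate arguments. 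You instead build the facet functional explicitly from support functions, $\phi_u(x,y)=\pro{u}{x}-\sum_i h_{P_i}(u)\,y_i$, whose primitivity is immediate (vary $x$ alone while fixing $y$), and you obtain the cleaner quantitative dictionary: the lattice distance of the $u$-facet of $lP$ from $(x_0,\vect{1})$ equals the lattice distance of the $u$-facet of $Q:=P_1+\cdots+P_l$ from $x_0$. Combined with the unconditional distance-one computation for the vertical facets, this yields both implications simultaneously, and it makes transparent why no analogue of condition (2) of Proposition~\ref{prop:cayley} appears here: with all $k_i=1$ a vertical facet automatically has distance one, whereas for $k_i\ge2$ it would sit at distance $k_i$ (so your computation in fact generalizes verbatim to the weighted setting). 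What your route costs is the facet enumeration you rightly flag as the delicate point --- that the non-vertical facets of $lP$ biject with the facet normals of $Q$ --- but your justification is sound: for $y$ in the open simplex the normal fan of $\sum_i y_iP_i$ is the common refinement of the normal fans of the $P_i$, hence equals that of $Q$, and your dimension count $(\dim Q-1)+(l-1)=\dim P-1$ confirms that each wall $\{\phi_u=0\}$ is indeed a facet; together with the routine observation that a facet not contained in any $\{y_i=0\}$ must meet the region over the open simplex, this closes the enumeration. The reduction to full-dimensional $Q$ at the start is, as you say, harmless since all lattice distances in the paper are taken relative to affine hulls.
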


This allows us to see why Theorem~\ref{BJ-conj} is sharp. Take for $s\ge1$
\[P_1 := \conv(0,e_1), \ldots, P_s := \conv(0,e_s), P_{s+1} := \conv(0,-e_1), \ldots, P_{2s} := \conv(0,-e_s).\]
Here, $P_1 + \cdots + P_{2s} = [-1,1]^s$ is reflexive. Hence, Proposition~\ref{cayley-old} implies that the Cayley polytope $P_1 * \cdots * P_{2s}$ is a Gorenstein polytope of dimension $3s-1$ and degree $s$ which is not a lattice pyramid.

It has also been conjectured by Batyrev and Juny in \cite[Conjecture~0.1]{BJ10} that this is the only such extreme example. This part of their conjecture is still open.

\subsection{The Batyrev-Juny conjecture refined}

The following result, \cite[Thm.~3.1]{HNP09}, is analogous to Theorem~\ref{BJ-conj} and shows that while Gorenstein polytopes with $2s \le d \le 3s-1$ may not be lattice pyramids, 
they still decompose as Cayley polytopes. 

\begin{thm}
Let $P$ be a Gorenstein polytope of dimension $d$ and degree $s$. If $d \ge 2s$, then $P$ is a Cayley polytope of lattice polytopes in $\R^n$ with $n \le 2s-1$.
\label{thm:cayley}
\end{thm}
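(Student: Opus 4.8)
The plan is to prove Theorem~\ref{thm:cayley} by establishing a sufficient combinatorial criterion for being a Cayley polytope and then verifying it from the Gorenstein structure. The natural criterion to aim for is the following: a lattice polytope $P$ decomposes as a Cayley polytope $P_1 * \cdots * P_l$ of polytopes living in dimension $n$ precisely when there is a lattice projection $\pi \colon \aff(P) \to \R$ (an affine functional) whose image is $\{1, \ldots, l\}$ and such that the fibers over these $l$ consecutive values are exactly the pieces $P_i$. Equivalently, being a Cayley polytope of polytopes in $\R^n$ with $n \le 2s-1$ means one can find an integral affine functional $\pi$ on $P$ taking at least $d - (2s-1)$ distinct consecutive integer values on the lattice points of $P$, so that the fibers have total affine dimension at most $2s-1$. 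So the task reduces to producing such a ``long'' lattice functional.

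To produce this functional I would exploit that $P$ is Gorenstein: by definition $rP$ is reflexive for $r$ equal to the codegree, and $r = d+1-s$. Reflexivity gives an interior lattice point and the property that every facet has lattice distance one from it; dually, this endows the boundary of $rP$ with a very rigid structure. The key quantity to control is the codegree $r = d+1-s$, which is large relative to $d$ exactly when $d \ge 2s$ (then $r \ge s+1 > d/2$). The strategy is to use the large codegree to force the existence of a lattice direction along which $P$ is ``thin'' in the sense of having few lattice layers, equivalently to find an affine functional witnessing many empty integer levels. Concretely, I would look at the $h^*$-polynomial, whose degree is $s$ and whose codegree (the smallest exponent with nonzero coefficient, equivalently $d+1-\deg h^*$) is $r$; palindromicity of $h^*$ for Gorenstein polytopes (Hibi's criterion, cited in the excerpt) pins down its symmetry. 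The interplay $\deg + \codeg = d+1$ with $\deg = s$ and the constraint $d \ge 2s$ is what should be leveraged to guarantee a Cayley decomposition into low-dimensional pieces.

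The concrete mechanism I expect to use is an analysis of a vertex and its ``opposite'' facet. Pick a vertex $v$ of $P$; the facets not containing $v$ all lie at various lattice distances from $v$, and by the reflexive structure of $rP$ one can bound these distances in terms of $s$. The plan is to take the affine functional $u$ given by lattice distance from the facet of $rP$ opposite (or from $v$) and argue that, because the degree is only $s$ while the dimension is at least $2s$, the lattice points of $P$ occupy at most $2s$ of the integer levels of $u$, with each level contributing at most dimension-drop. Grouping the fibers of $u$ then yields the Cayley structure $P_1 * \cdots * P_l$ with the base polytopes $P_1, \ldots, P_l$ sitting in a common space $\R^n$, and tracking the dimensions of the fibers against $d$ and $s$ should give the bound $n \le 2s-1$. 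The dimension bookkeeping uses the identity $\dim(P_1 * \cdots * P_l) = \dim(P_1 + \cdots + P_l) + l - 1$ from the definition, so a lower bound of $d-(2s-1)$ on the number of layers $l$ translates directly into the upper bound on $n$.

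The main obstacle I anticipate is the crucial step that turns the numerical degree bound into an actual geometric splitting: namely, showing that the lattice points of $P$ really are confined to few consecutive integer levels of a suitable functional, and that the corresponding fibers are genuine lattice polytopes realizing a Cayley sum rather than merely a projection. The degree $s$ controls the number of interior lattice points of dilates and hence bounds $h^*$-data, but converting this into a statement about a single functional requires choosing $u$ canonically from the reflexive geometry of $rP$ and proving that no lattice points of $P$ escape the predicted band. I expect this to rest on a careful argument that the hollow slabs of $rP$ (levels with no interior lattice points) are forced by the codegree to be numerous, pushing the genuine content of $P$ into a band of width at most $2s-1$, and that the Cayley identity for dimensions then closes the estimate.
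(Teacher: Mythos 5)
First, a remark on the ground truth: the paper does \emph{not} prove Theorem~\ref{thm:cayley} at all --- it is imported verbatim from \cite[Thm.~3.1]{HNP09} --- so your attempt has to be measured against that proof. Against it, your proposal has a genuine gap, starting with the foundational ``criterion'' in your first paragraph, which is false. A decomposition $P \cong P_1 * \cdots * P_l$ is equivalent to a lattice projection of $P$ onto the \emph{unimodular $(l-1)$-simplex} $\conv(e_1,\ldots,e_l)$, i.e.\ onto an $(l-1)$-dimensional image; it is \emph{not} equivalent to a single integral affine functional taking $l$ consecutive values whose fibers are the $P_i$. Already the forward direction fails for $l \ge 3$: for $P_1 * P_2 * P_3$ the functional $u$ with $u(P_i \times \{e_i\}) = i-1$ has fiber over the value $1$ equal to the convex hull of $P_2 \times \{e_2\}$ \emph{and} $\tfrac12 (P_1+P_3) \times \{(\tfrac12,0,\tfrac12)\}$, which is strictly larger than the layer $P_2$. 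The converse direction fails too, and the paper itself hands you the counterexample: $2\Delta_{2d-1}$ maps onto the segment $[0,2]$ under $x \mapsto x_1 + \cdots + x_{2d-1}$, so its lattice points occupy only the three consecutive levels $0,1,2$, yet it is not a Cayley polytope (this is exactly the sharpness example following Theorem~\ref{thm:cayley}). So ``thin along one functional'' is strictly weaker than the conclusion you need: to land in $\R^n$ with $n \le 2s-1$ you must produce $l-1 \ge d-2s+1$ independent lattice directions whose image is a unimodular simplex, and no single functional can certify that.

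Second, even granting a corrected criterion, the proposal never carries out its central step, and you say so yourself in the last paragraph: the passage from the numerical data ($\deg = s$, $\codeg = d+1-s$, palindromic $h^*$) to an actual geometric splitting is precisely the content of the theorem, and it is not obtained by counting hollow slabs or lattice levels. Indeed the intermediate claim that degree $s$ confines the lattice points of $P$ to few levels of a well-chosen functional is unsubstantiated: degree controls $h^*$-data, but width-type confinement does not follow from it by the slab argument you sketch (hollow polytopes, which have degree $\le d-1$, can have lattice width growing with $d$). The proof in \cite{HNP09} is different in kind: it exploits Gorenstein duality --- the reflexive Gorenstein cone, respectively a special simplex structure in the dual Gorenstein polytope --- to manufacture the lattice projection onto a unimodular simplex of dimension $d-2s+1$; this duality mechanism (which also powers the present paper's proof of Proposition~\ref{IDP} via $P^\times$ and \cite[Cor.~2.12]{BN08}) has no counterpart in your sketch. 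As it stands, the proposal is a plan resting on an incorrect reduction, with the theorem's actual content left unproved.
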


The bound is sharp. Recall that $\Delta_d$ denotes the $d$-dimensional standard simplex. We see that $2 \Delta_{2d-1}$ is a Gorenstein simplex of dimension $2d-1$ and degree $d$ that is not a Cayley polytope.

Based upon the complete classification of Gorenstein polytopes of degree $\le 2$ it is reasonable to conjecture that the conclusion in Theorem~\ref{thm:cayley} should be considerably stronger. For this, we need the notion of Cayley joins and reducible Gorenstein polytopes introduced in \cite{NS13}. 

\begin{defn}
Let $F, G \subset \R^n$ be lattice polytopes. Then $P$ is called \emph{Cayley join} of $F$ and $G$ if $P \cong F*G$ and $\dim(F) + \dim(G) = \dim(P)-1$.
\end{defn}

By taking the dual Gorenstein polytope into account, it is possible to tighten the definition of a Cayley join for Gorenstein polytopes further. Instead of giving here the original definition of \emph{reducible} (and otherwise {\em irreducible}) Gorenstein polytopes, \cite[Def.~4.13]{NS13}, the following formulation is more suitable for our purposes. The facts that these two definitions agree and that the factors are Gorenstein are essentially contained in the proofs of \cite[Thm.~5.8]{NS13} and \cite[Thm.~4.12]{NS13}; details will be given in an upcoming paper \cite{Upcoming}.

\begin{defn}
Let $P$ be a Gorenstein polytope. We say $P$ {\em is reducible with factors} $F$ and $G$ if $P$ is a Cayley join of $F$ and $G$ and $\codeg(F) + \codeg(G) = \codeg(P)$ (equivalently, $\deg(F) + \deg(G)=\deg(P))$. 
In this case, $F$ and $G$ are also Gorenstein polytopes.
\label{next1}
\end{defn}

Note that if a Gorenstein polytope is a lattice pyramid, then it is also reducible (by convention, we consider a lattice point as reducible). We are now ready to formulate the following strengthening of Theorem~\ref{thm:cayley} as a (quite bold) conjecture.

\begin{conj}
Let $P$ be a Gorenstein polytope of dimension $d$ and degree $s$. If $d > 2s$, then $P$ is reducible.
\label{conj-new}
\end{conj}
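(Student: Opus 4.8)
The plan is to reduce Conjecture~\ref{conj-new} to a purely combinatorial statement about Minkowski decompositions of reflexive polytopes, and then to attack that statement through the duality of the associated nef-partition. First I would dispose of the range $d \ge 3s$: there Theorem~\ref{BJ-conj} shows that $P$ is a lattice pyramid, and a lattice pyramid is reducible by the remark following Definition~\ref{next1}. So it suffices to treat $2s < d < 3s$, i.e.\ (writing $r := \codeg(P) = d+1-s$) the range $s+2 \le r \le 2s$, in particular $s \ge 2$. By Proposition~\ref{cayley-old} I may present $P \cong P_1 * \cdots * P_r$ with $Q := P_1 + \cdots + P_r$ reflexive, and a dimension count gives $\dim(Q) = d-(r-1) = s$. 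If some $P_i$ is a single lattice point then $P$ is a lattice pyramid, hence reducible; so I may assume $\dim(P_i) \ge 1$ for all $i$.

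The key reformulation is that, unwinding Definition~\ref{next1} via Proposition~\ref{cayley-old} applied to the two factors, $P$ is reducible exactly when the Minkowski decomposition of $Q$ \emph{splits}: there is a partition $\{1,\dots,r\} = I \sqcup J$ into nonempty blocks with $\R^s = \lin(Q_I) \oplus \lin(Q_J)$ a direct sum of complementary subspaces splitting the lattice, where $Q_I := \sum_{i\in I} P_i$ and $Q_J := \sum_{j\in J} P_j$; under this condition $Q = Q_I \times Q_J$ is a product and both factors are automatically reflexive. Thus the entire statement becomes the assertion that a reflexive polytope of dimension $s$ which is a Minkowski sum of $r \ge s+2$ positive-dimensional lattice summands must split in this sense. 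To each summand I attach its direction space $V_i := \lin(P_i - P_i)$ of dimension $\ge 1$; these span $\R^s$ and so define a rank-$s$ linear matroid on $r$ elements, and producing a split amounts to exhibiting, compatibly with the integral structure, a single nontrivial block $I$ for which $V_I := \sum_{i\in I} V_i$ is a direct summand not met by the complementary directions. Here I would bring in the dual nef-partition $\nabla = \nabla_1 + \cdots + \nabla_r$ of the dual reflexive polytope \cite{BB94,BB96}, together with the duality inequalities $\pro{\nabla_i}{P_j} \ge -\delta_{ij}$, and use these $r^2$ families of relations, now heavily over-determined because $r \ge s+2$, to locate a proper coordinate sublattice on which all but a nonempty block of the $P_i$ are constant while that block spans a complementary sublattice; the base cases $s \le 2$ are supplied by the Batyrev--Juny classification \cite{BJ10}.

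The main obstacle is that linear algebra alone does \emph{not} force the matroid to decompose: $s+2$ vectors in general position in $\R^s$ form a connected matroid, and the sharp example shows that at $r = 2s$ the bound is realized only by the totally split configuration of the cube $[-1,1]^s$. The genuine content is therefore that \emph{reflexivity is incompatible with a connected, over-determined nef-partition}, and the crux is to establish this rigidity \emph{without} invoking the integer decomposition property. In the IDP case the summands generate the lattice and every lattice point of $Q$ decomposes, so the block structure of the nef-partition is visible on the vertices and edges of $Q$ and the splitting argument runs directly; in general the difficulty is exactly the non-IDP degenerations, where lattice gaps in the summands can hide the partition from the lattice-point combinatorics of $Q$. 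I would attempt to supply the missing rigidity through a sharper analysis of the vertices of the dual nef-partition $\nabla$ in a minimal irreducible counterexample of maximal dimension, deriving a contradiction from $r \ge s+2$ via the interplay of the two reflexive polytopes; securing this step unconditionally is where a new idea is needed, and is the reason the conjecture remains open beyond the IDP case.
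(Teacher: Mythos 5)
There is a genuine gap here, and in fact two distinct issues. First, the statement you are proving is Conjecture~\ref{conj-new}, which the paper itself leaves open: it establishes only the IDP case (Proposition~\ref{IDP}) and the weaker pyramid statement (Theorem~\ref{BJ-conj}), so no complete argument exists to compare against, and your proposal, as you concede in the final paragraph, is likewise not a proof. Second, and more concretely, your reduction already fails at the presentation step: Proposition~\ref{cayley-old} cannot be used to \emph{produce} a decomposition $P \cong P_1 * \cdots * P_r$ with exactly $r = \codeg(P) = d+1-s$ factors -- it only characterizes when a \emph{given} Cayley presentation yields a Gorenstein polytope of codegree equal to the number of factors. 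The only decomposition available unconditionally is Theorem~\ref{thm:cayley}, which guarantees $n \le 2s-1$, i.e.\ at least $d-2s+2$ factors, not $d+1-s$ of them; your dimension count $\dim(Q)=s$ presupposes the maximal-length presentation. Obtaining that presentation is precisely where the IDP hypothesis enters in the paper's proof of Proposition~\ref{IDP}, via \cite[Cor.~2.12]{BN08} applied to the dual Gorenstein polytope $P^\times$.

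This misstep also inverts where the real difficulty lies, so your roadmap mislocates the missing idea. If one really had $P \cong P_1 * \cdots * P_r$ with all $\dim(P_i)\ge 1$ and $Q = P_1 + \cdots + P_r$ reflexive of dimension $s$, then $r \ge s+2$ and the ``moreover'' part of Lemma~\ref{lem:mink-bound} (with $k_1=\cdots=k_r=1$, so that condition (2) is vacuous) immediately gives that $P$ is reducible: the splitting $\lin(Q_I)\oplus\lin(Q_J)=\R^s$ you propose to extract from the dual nef-partition inequalities is exactly what that lemma already derives from \cite[Prop.~6.11, 6.13]{BN08} together with the nonnegativity of the mixed degree \cite{Nil20}. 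So the ``rigidity of over-determined nef-partitions'' that you flag as the open crux is in fact settled by the paper; what is genuinely open is the prior existence of a length-$\codeg(P)$ Cayley structure (equivalently, sufficient control of $P^\times$ without IDP). A smaller caveat: your claim that $P$ is reducible ``exactly when'' the Minkowski decomposition splits should be weakened to ``if'' -- Definition~\ref{next1} permits Cayley-join factorizations of $P$ that need not respect a chosen Minkowski presentation of $Q$, so the splitting is a sufficient condition, not a reformulation.
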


It has an affirmative answer for $s \le 2$ by the complete classification results in \cite{BN07,BJ10}. Let us also note that it holds for Gorenstein simplices: it follows from \cite[Cor.~3.10(1)]{DHNP13} that Gorenstein simplices are lattice pyramids if $d \ge 2s$.

\begin{rem}
The bound in Conjecture~\ref{conj-new} would be sharp: Let $P := \Delta_s \times \Delta_s$ for $s>0$. Note that $P$ is the Cayley polytope of $s+1$ copies of $\Delta_s$. Moreover, $P$ is a Gorenstein polytope of dimension $2s$ and degree $s$. It is left to the reader to check that it is an irreducible Gorenstein polytope, in fact, it is not even a (Cayley) join. 
As the classification of Gorenstein polytopes of degree $2$ in dimension $4$ shows \cite{BJ10}, this extreme example is not unique.
\end{rem}

\begin{ex} Let us sum up above results and conjectures for Gorenstein polytopes $P$ of degree $s=2$ as classified in \cite{BJ10}. For $d=4$, any $P$ is a Cayley polytope, compare Theorem~\ref{thm:cayley}. For $d=5$, any $P$ is reducible, in accordance with Conjecture~\ref{conj-new}. In fact, there is only one non-lattice-pyramid $5$-dimensional Gorenstein polytope of degree $2$, namely $\conv(0,e_1,e_2,e_1+e_2) * \conv(0,e_3,e_4,e_3+e_4)$. From $d \ge 6$ onwards, there is no non-lattice-pyramid $P$, as stated by Theorem~\ref{BJ-conj}.
\end{ex}

We can show that Conjecture~\ref{conj-new} holds for a large class of Gorenstein polytopes. Let us recall that a lattice polytope $P$ is \emph{IDP} if for each $k \in \Z_{\ge 2}$ any lattice point in $kP$ is a sum of $k$ lattice points in $P$.

\begin{prop}
Let $P$ be an IDP Gorenstein polytope of dimension $d$ and degree $s$. If $d > 2s$, then $P$ is reducible.
\label{IDP}
\end{prop}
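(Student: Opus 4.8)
The plan is to combine the Cayley decomposition of Theorem~\ref{thm:cayley} with the integer-decomposition property in order to exhibit $P$ as a Cayley join of two Gorenstein polytopes whose degrees add up to $s$. We may assume $s \geq 1$, since for $s=0$ the polytope $P \cong \Delta_d$ is a lattice pyramid and hence reducible.

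First I would apply Theorem~\ref{thm:cayley}. Since $d > 2s$ we have $d \ge 2s$, so $P \cong P_1 * \cdots * P_l$ for lattice polytopes $P_i \subset \R^n$ with $n \leq 2s-1$. By Proposition~\ref{cayley-old} the Minkowski sum $R := P_1 + \cdots + P_l$ is reflexive, and comparing dimensions and codegrees gives $\dim R = s$ and $l = \codeg(P) = d+1-s$. The hypothesis $d > 2s$ therefore yields $l \geq s+2$. If some factor $P_i$ is a single lattice point, then $P$ is a pyramid over the Cayley polytope of the remaining factors, hence a lattice pyramid, and lattice pyramids are reducible by the convention recorded before Conjecture~\ref{conj-new}; in this case we are done. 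So from now on I assume $\dim P_i \geq 1$ for every $i$.

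Next I would pass to a nef-partition, and here the IDP hypothesis enters for the first time. The unique interior lattice point $x_0$ of $R$ is a lattice point of $R = P_1 + \cdots + P_l$, so by the integer-decomposition property of $P$ (applied with $k=l$) we may write $x_0 = p_1 + \cdots + p_l$ with $p_i \in P_i \cap \Z^n$. Translating each factor by $-p_i$, I may assume $0 \in Q_i := P_i$ for all $i$ and that $0$ is the interior lattice point of the reflexive polytope $R = Q_1 + \cdots + Q_l$. The goal is then to find a partition of $\{1,\dots,l\}$ into nonempty blocks $I$ and $J$ such that $R_I := \sum_{i \in I} Q_i$ and $R_J := \sum_{j \in J} Q_j$ are both reflexive and $\lspan(R_I) \oplus \lspan(R_J) = \R^s$. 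Indeed, Proposition~\ref{cayley-old} would then show that the Cayley polytopes $F$ and $G$ of the two blocks are Gorenstein with $\deg F = \dim R_I$ and $\deg G = \dim R_J$; the span condition gives $\dim R_I + \dim R_J = s$, which is exactly the Cayley-join dimension identity $\dim F + \dim G = \dim P - 1$ together with $\deg F + \deg G = \deg P$. By Definition~\ref{next1} this makes $P$ reducible.

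The main obstacle is producing this splitting, and it is here that I expect IDP to be indispensable: for general Gorenstein polytopes the analogous statement is exactly the still-open Conjecture~\ref{conj-new}, and the irreducible example $\Delta_s \times \Delta_s$, for which $d = 2s$ and $l = s+1$, shows that the bound $d > 2s$ cannot be relaxed even under IDP. I would attack the splitting through the facet structure of $R$. For a facet $F$ of $R$ with primitive inner normal $u$, reflexivity gives $1 = -\min_{x \in R}\langle u, x\rangle = \sum_{i} \bigl(-\min_{x \in Q_i}\langle u, x\rangle\bigr)$, where each summand is a nonnegative integer because $0 \in Q_i$ and the minimum is attained at a lattice vertex; hence exactly one factor $Q_{\phi(F)}$ receives a contribution from $F$. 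Since no $Q_i$ is a point, every index occurs as some $\phi(F)$ (otherwise $Q_i$ would lie on the nonnegative side of every facet and thus reduce to $\{0\}$). The plan is to use the charging map $\phi$ together with the linear spans $\lspan(Q_i)$ to organize the indices: when the spans decompose $\R^s$ into complementary subspaces the partition and the product structure $R = R_I \times R_J$ are read off directly, and the crux is to show that IDP forces such a decomposition to exist once $l \geq s+2$ and all factors are at least one-dimensional. The technical heart will be to translate the integer-decomposition property of the family $(Q_1, \ldots, Q_l)$ — every lattice point of $\sum_i k_i Q_i$ is a sum of $k_i$ lattice points of $Q_i$ for each $i$ — into the statement that the reflexive polytope $R$ splits compatibly with the partition, and I expect this reduction to be the most delicate part of the argument.
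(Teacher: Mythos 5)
There are two genuine gaps. First, your opening step ``comparing dimensions and codegrees gives $\dim R = s$ and $l = \codeg(P) = d+1-s$'' is a non sequitur. Theorem~\ref{thm:cayley} only provides \emph{some} Cayley decomposition $P \cong P_1 * \cdots * P_l$ with $n \le 2s-1$; for such a decomposition the Gorenstein property forces, via Proposition~\ref{prop:cayley}, only the existence of weights $(k_1,\ldots,k_l) \in \Z_{\ge 1}^l$ with $k_1 + \cdots + k_l = d+1-s$ such that the \emph{weighted} sum $k_1 P_1 + \cdots + k_l P_l$ is reflexive. The plain sum $P_1 + \cdots + P_l$ need not be reflexive and $l$ need not equal the codegree (compare the paper's example $\Delta_2 * (\{0\} \times [0,1])$, which has $l=2$ but codegree $3$), so Proposition~\ref{cayley-old} does not apply as you use it, and your IDP argument decomposing the interior point $x_0$ into one lattice point per factor presupposes exactly the all-weights-one structure you failed to establish. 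The failure is quantitative, not cosmetic: with only $n \le 2s-1$ one can have $d = 2s+1$, $\sum k_i = s+2$ but $n+2 = 2s+1$, so no splitting criterion of the form $\sum k_i \ge n+2$ is reachable. The paper obtains precisely the configuration you assert, but for the \emph{dual} Gorenstein polytope $P^\times$: the IDP hypothesis enters once, through \cite[Cor.~2.12]{BN08}, which says that $P^\times$ is a Cayley polytope of exactly $l = \codeg(P) = d+1-s$ polytopes in dimension $n = s$, whence Proposition~\ref{cayley-old} gives reflexivity of the unweighted sum.

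Second, what you call the ``technical heart'' --- producing a partition $I \sqcup J$ with $R_I$, $R_J$ reflexive and complementary spans --- is left entirely as a plan, and your expectation that IDP is indispensable there is misplaced. That splitting is exactly the second statement of Lemma~\ref{lem:mink-bound}, already proved in the paper with no IDP hypothesis (via \cite[Prop.~6.13 and 6.11]{BN08} and nonnegativity of the mixed degree): if all $P_i$ are positive-dimensional, the plain Minkowski sum is reflexive, and $l \ge n+2$, then the Cayley polytope is reducible. In the paper's setup $l = d+1-s \ge s+2 = n+2$ follows directly from $d > 2s$, irreducibility transfers between $P$ and $P^\times$ by \cite[Def.~4.13]{NS13}, and the contradiction is immediate; your nef-partition point $x_0 = p_1 + \cdots + p_l$ (a correct use of IDP via the Cayley trick, once all weights are one) is derived internally in the lemma's proof anyway and does not advance the splitting. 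In short, IDP must be spent on obtaining the dual Cayley structure with $n = s$, not on the splitting itself; as written, your argument starts from an unjustified decomposition and stops before the decisive step.
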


The proof is given at the end of Section~3. 

\begin{rem}
Proposition~\ref{IDP} shows that in order to construct {\em all} IDP Gorenstein polytopes of given degree $s$, it suffices to know the finitely many IDP Gorenstein polytopes of degree at most~$s$ {\em in dimension at most $2s$}. This follows from the following observations. First, if $P$ is a reducible IDP Gorenstein polytope, then also its factors are IDP Gorenstein polytopes. Second, if a Cayley join of $F$ and $G$ is IDP, then it is automatically a {\em free join} (also called {\em $\Z$-join}), i.e., it is isomorphic to the convex hull of $F \times \{0\} \times \{0\}$ and $\{0\} \times G \times \{1\}$. Third, as $h^*$-polynomials of free joins multiply (e.g., \cite[Lemma~1.3]{HenkTagami}), the free join of two Gorenstein polytopes of degrees $s_1$ and $s_2$ is a Gorenstein polytope of degree $s_1+s_2$. 
\end{rem}

Finally, let us observe that Conjecture~\ref{conj-new} is indeed a refinement of the Batyrev-Juny conjecture proven above. For this, we need the following statement that is contained in the master thesis of Michael \cite{Mic20}; a detailed proof will also be given in the upcoming paper \cite{Upcoming}.

\begin{lem}
Let $P$ be a Gorenstein polytope that is reducible with factors $F$ and $G$. If $F$ is a lattice pyramid, then $P$ is a lattice pyramid.
\label{next2}
\end{lem}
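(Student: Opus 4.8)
The plan is to reduce the statement to the combinatorics of Cayley polytopes, using the key observation that a lattice pyramid is precisely a Cayley join with a single lattice point. If $F$ is a lattice point the conclusion is immediate, since then $P \cong F * G$ is already a pyramid over $G$; so assume $F$ is a genuine pyramid and write $F \cong \conv(F' \times \{0\}, \{0\} \times \{1\})$ for a lattice polytope $F'$. I would first record the elementary fact that this standard pyramid is exactly the Cayley join $F' * \{0\}$, the apex serving as the one-point factor; more generally, a Cayley join $Q * \{v\}$ of any lattice polytope $Q$ with a single point $\{v\}$ is a lattice pyramid over $Q$, as one sees after an integral shear carrying $v$ to the origin. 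The reducibility hypothesis supplies $P \cong F * G$ together with $\dim(F) + \dim(G) = \dim(P) - 1$, which forces $F$ and $G$---and hence $F'$, $\{0\}$ and $G$---to lie in mutually independent directions. Substituting the pyramid description of $F$ then exhibits $P$ as the iterated Cayley join $(F' * \{0\}) * G$.

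The heart of the argument is an associativity-and-commutativity principle for genuine Cayley joins: when $A$, $B$, $C$ sit in mutually independent directions, the iterated join $(A * B) * C$, the threefold Cayley polytope $A * B * C$, and all reorderings of the factors are unimodularly equivalent. I would prove this by writing the vertices in the coordinates furnished by the Cayley levels $e_i$ and exhibiting the lattice isomorphism explicitly. In $(A * B) * C$ the outer level coordinates satisfy $g_1 + g_2 = 1$ on the whole polytope; forgetting the redundant coordinate $g_1 = 1 - g_2$ leaves the three coordinates $f_1, f_2, g_2$, which serve as the level coordinates of $A * B * C$, and this projection is the desired isomorphism. Commutativity is trivial, realized by permuting the levels $e_i$, and the reassociation is merely the statement that the discarded coordinate is an integral affine function of the remaining ones on the affine hull, so that the projection restricts to an isomorphism of affine lattices.

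Granting this principle, the conclusion is a short computation:
\[
P \cong (F' * \{0\}) * G \cong F' * \{0\} * G \cong F' * G * \{0\} \cong (F' * G) * \{0\},
\]
where the isomorphisms are, in order, reassociation, a permutation of factors, and reassociation again. The final polytope is the Cayley join of the lattice polytope $F' * G$ with a single point, hence a lattice pyramid over $F' * G$ by the observation above. Therefore $P$ is a lattice pyramid.

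The step I expect to be the main obstacle is making the associativity principle honest rather than merely plausible. The delicate points are, first, ensuring that every join in sight is genuine---this is exactly where the dimension condition built into reducibility is used, since it guarantees that regrouping the factors never collapses a dimension---and, second, checking that each coordinate manipulation preserves the ambient lattice and not only the real affine structure. Once these bookkeeping issues are settled, the remaining arguments are routine.
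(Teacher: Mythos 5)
Your argument never uses the hypothesis $\codeg(F)+\codeg(G)=\codeg(P)$. The ``dimension condition built into reducibility'' that you invoke is only the Cayley-join condition $\dim(F)+\dim(G)=\dim(P)-1$, which is part of the definition of a Cayley join, not the extra content of reducibility. So if your proof worked, it would establish the much stronger statement that \emph{every} Cayley join with one factor a lattice pyramid is a lattice pyramid --- and that statement is false. Take $F=\conv((0,0),(1,0))$ and $G=\conv((0,0),(1,2))$ in $\R^2$: both are unimodular segments, hence lattice pyramids over a point, and their directions are linearly independent, so $P=F*G$ is a genuine Cayley join of dimension $3$. But $P$ is the empty $3$-simplex with vertices $(0,0,1,0)$, $(1,0,1,0)$, $(0,0,0,1)$, $(1,2,0,1)$, which has normalized volume $2$, all facets unimodular, and every vertex at lattice distance $2$ from the opposite facet; it is not a lattice pyramid. (This does not contradict the lemma: here $\codeg(F)=\codeg(G)=2$ while $\codeg(P)=2$, since $P$ is hollow and $2P$ contains the interior lattice point $(1,1,1,1)$ after dilation --- so $P$ is not reducible with these factors. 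But your proof, applied verbatim, would conclude that this $P$ is a pyramid.)

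The precise step that fails is the substitution $P\cong(F'*\{0\})*G$. The isomorphism $F\cong\conv(F'\times\{0\},\{0\}\times\{1\})$ is an isomorphism with respect to the \emph{intrinsic} affine lattice of $\aff(F)$ only. To feed it into the join with $G$ and then reassociate, you need the splitting of direction spaces $\lin(F-f_0)\oplus\lin(G-g_0)$ to hold at the level of lattices, i.e., $\bigl(\lin(F-f_0)\cap\Z^n\bigr)\oplus\bigl(\lin(G-g_0)\cap\Z^n\bigr)$ must be the full relevant lattice; linear independence of the direction spaces only gives a finite-index sublattice, and in the example above the index is $2$ --- which is exactly the lattice distance of the would-be apex $(v,e_1)$ from $\aff(F''*G)$ inside $\aff(P)$. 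Your associativity principle for honest iterated Cayley joins with freshly introduced level coordinates is correct as far as it goes; what is unjustified (and false in general) is converting the internal pyramid structure of $F$ into a Cayley factorization compatible with the ambient lattice shared with $G$. Any correct proof must genuinely use the codegree additivity, i.e., the Gorenstein structure of $P$, $F$ and $G$; note that the paper itself does not prove this lemma but cites the thesis \cite{Mic20} and defers a detailed proof to \cite{Upcoming}, which is an indication that the statement is not a purely formal consequence of join combinatorics.
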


This allows us to deduce the following implication.

\begin{prop}
Conjecture~\ref{conj-new} implies Theorem~\ref{BJ-conj}.
\label{implication}
\end{prop}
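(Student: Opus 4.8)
The plan is to argue by induction on the dimension $d$, taking Conjecture~\ref{conj-new} as an available input and using Lemma~\ref{next2} to transport a lattice-pyramid structure from a factor back up to $P$. First I would dispose of the degenerate situation: if $s = 0$, then $P \cong \Delta_d$ is a lattice pyramid and there is nothing to prove; this also settles the case $d = 0$, where $P$ is a lattice point. So I may assume $s \ge 1$ and $d \ge 3s$, and assume inductively that Theorem~\ref{BJ-conj} holds for every Gorenstein polytope of dimension strictly less than $d$.

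Since $s \ge 1$, the hypothesis $d \ge 3s$ yields $d \ge 3s > 2s$, so Conjecture~\ref{conj-new} applies and $P$ is reducible. By Definition~\ref{next1} there are Gorenstein factors $F$ and $G$ with $P \cong F * G$, with $\dim F + \dim G = d - 1$, and with $\deg F + \deg G = s$. The next step is a pigeonhole count on these data. Suppose, for contradiction, that neither factor met the pyramid bound, i.e. $\dim F \le 3\deg F - 1$ and $\dim G \le 3\deg G - 1$. Adding these gives $d - 1 = \dim F + \dim G \le 3(\deg F + \deg G) - 2 = 3s - 2$, which contradicts $d \ge 3s$. Hence at least one factor, say $F$ after relabeling, satisfies $\dim F \ge 3\deg F$.

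Finally, because $\dim G \ge 0$ the Cayley-join dimension formula gives $\dim F = d - 1 - \dim G \le d - 1 < d$, so the induction hypothesis applies to the Gorenstein polytope $F$ and shows that $F$ is a lattice pyramid. Lemma~\ref{next2} then upgrades this to the conclusion that $P$ itself is a lattice pyramid, closing the induction.

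The argument is short once Conjecture~\ref{conj-new} and Lemma~\ref{next2} are granted, so I do not anticipate a serious obstacle. The only points requiring genuine care are the well-foundedness of the induction — it is essential that $\dim F < d$ strictly, which is exactly what the join dimension formula $\dim F + \dim G = \dim P - 1$ secures together with $\dim G \ge 0$ — and the elementary additivity of degrees in Definition~\ref{next1}, which drives the counting step that forces one factor into the regime $\dim \ge 3\deg$. The real content of the reduction lives entirely inside the two assumed results.
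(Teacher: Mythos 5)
Your proof is correct and takes essentially the same approach as the paper: induction on the dimension, reducibility into Gorenstein factors $F$ and $G$ via Conjecture~\ref{conj-new}, the count based on $\dim F + \dim G = d-1$ and $\deg F + \deg G = s$ forcing at least one factor into the regime of Theorem~\ref{BJ-conj}, and Lemma~\ref{next2} to lift the lattice-pyramid structure back to $P$. The only difference is presentational: the paper runs the same count contrapositively, first using Lemma~\ref{next2} to assume neither factor is a lattice pyramid and then deriving the contradiction $\dim P \le 3s-1$, while you apply the pigeonhole directly and invoke the lemma at the end.
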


\begin{proof}

By induction on the dimension we may assume that Theorem~\ref{BJ-conj} holds for Gorenstein polytopes of dimension $< d$ (the case $d \le 1$ being evident). Let $P$ be a Gorenstein polytope of dimension $d \ge 2$ and degree $s$. Let $d \ge 3s$, and $P$ be reducible with factors $F$ and $G$. By Lemma~\ref{next2} we may assume that $F$ and $G$ are not lattice pyramids (in particular their degrees are positive). Hence, by induction hypothesis we have $\dim(F) \le 3 \deg(F) - 1$ and $\dim(G) \le 3 \deg(G)-1$. This implies $\dim(P) = \dim(F) + \dim(G) + 1 \le 3 (\deg(F) + \deg(G)) - 1 = 3 s - 1$, a contradiction.
\end{proof}

\section{Proofs of Theorem~\ref{BJ-conj}  and Proposition~\ref{IDP}}

Both proofs are quite short, however, they need as preparation some generalizations and strenghtenings of results in \cite{BN08}. Let us first generalize Proposition~\ref{cayley-old}. 	

\begin{prop}
Let $P_1, \ldots, P_l \subset \R^n$ be lattice polytopes such that $\dim(P_1+\cdots+P_l) = n$, and let $P :=P_1 * \cdots * P_l$ be their Cayley polytope. Then $P$ is a Gorenstein polytope of codegree $r$ if and only if there exists $(k_1, \ldots, k_l) \in \Z_{\ge 1}^l$ with $k_1 + \cdots + k_l = r$ such that 

\begin{enumerate}
\item $k_1 P_1 + \cdots + k_l P_l$ is reflexive and
\item $\dim(\sum_{j \not=i} P_j) < n$ if $k_i \ge 2$ for $i \in \{1, \ldots, l\}$.
\end{enumerate}
\label{prop:cayley}
\end{prop}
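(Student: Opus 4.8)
The plan is to analyze $rP$ directly through its facets, working inside the affine hull $\aff(P)=\{(x,y)\in\R^n\times\R^l : \sum_i y_i = 1\}$. Writing $\Delta := \conv(e_1,\dots,e_l)$, the projection to the $\R^l$-coordinates maps $P$ onto $\Delta$, and the fibre over a height $y$ is the weighted Minkowski sum $\sum_i y_i P_i$; in particular the fibre of $rP$ over an integral height $m=(m_1,\dots,m_l)$ with $m_i\ge 0$ and $\sum_i m_i = r$ is the slice $\sum_i m_i P_i$. First I would record two structural facts about this fibration. (i) A point of $rP$ is interior relative to $\aff(rP)$ if and only if all $y_i>0$ and $x$ lies in the relative interior of its fibre; in particular every interior lattice point has the form $x^\ast=(x_0,k)$ with $k\in\Z_{\ge 1}^l$, $\sum_i k_i = r$, and $x_0$ in the relative interior of the slice $Q:=k_1P_1+\cdots+k_lP_l$. (ii) The facets of $P$ split into two families: the \emph{vertical} facets $\{y_i=0\}\cap P = P_1*\cdots*\widehat{P_i}*\cdots*P_l$, which by the dimension formula for Cayley polytopes is a facet exactly when $\dim(\sum_{j\ne i}P_j)=n$; and the \emph{horizontal} facets, minimised by a functional $(w,c)$ with $c_i=-\min_{p\in P_i}\pro{w}{p}$, which via the identity $\min_{rP}(w,c)=r\,\min_i(\min_{P_i}\pro{w}{\cdot}+c_i)$ correspond bijectively to the facet normals $w$ of $P_1+\cdots+P_l$ --- equivalently of $Q$, since the $k_i$ are positive and so $Q$ has the same normal fan.

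The heart of the argument is then a pair of lattice-distance computations against the candidate centre $x^\ast=(x_0,k)$. For a vertical facet $\{y_i=0\}$ the distance is given by the primitive functional $y_i$ and equals $k_i$. For a horizontal facet with direction $w$ the distance, given by the integral functional $(w,c)$ (which one checks is primitive on $\aff(rP)\cap\Z^{n+l}$), equals $\pro{w}{x_0}-\sum_i k_i\min_{P_i}\pro{w}{\cdot}=\pro{w}{x_0}-\min_{Q}\pro{w}{\cdot}$, which is exactly the lattice distance of the corresponding facet of $Q$ from $x_0$. Thus the vertical facets are at distance one precisely when the relevant $k_i=1$, and the horizontal facets are all at distance one from $x^\ast$ precisely when all facets of $Q$ are at distance one from $x_0$.

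With these in hand both implications are short. For the ``if'' direction, assume (1) and (2): take $x_0$ to be the interior lattice point of the reflexive polytope $Q$ and set $x^\ast=(x_0,k)$, which is interior by (i). Any index $i$ with $k_i\ge 2$ has, by (2), $\dim(\sum_{j\ne i}P_j)<n$, so $\{y_i=0\}$ is not a facet; the remaining vertical facets have $k_i=1$ and hence distance one, while every horizontal facet has distance one by reflexivity of $Q$. Hence $rP$ is reflexive, so $P$ is Gorenstein, and since the index of a Gorenstein polytope equals its codegree, the codegree is $r=\sum_i k_i$. For the ``only if'' direction, let $P$ be Gorenstein of codegree $r$, so $rP$ is reflexive with centre $x^\ast$; by (i) we may write $x^\ast=(x_0,k)$ with $k\in\Z_{\ge1}^l$, $\sum_i k_i=r$, and $x_0$ an interior lattice point of $Q$. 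If some $k_i\ge 2$, then $\{y_i=0\}$ must fail to be a facet (otherwise it would be at distance $k_i\ge 2$ from $x^\ast$), which gives (2); and since every facet of $Q$ corresponds to a horizontal facet of $rP$ at distance one from $x^\ast$, every facet of $Q$ is at distance one from $x_0$, so $Q$ is reflexive, which gives (1).

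The main obstacle I anticipate is not the bookkeeping of the two directions but the two structural facts (i) and (ii): making the facet classification of the Cayley polytope precise (in particular the dimension count that decides when $\{y_i=0\}$ is a genuine facet, and ruling out facets supported by functionals with $w\ne 0$ on fewer than all blocks), verifying that the horizontal-facet normals are exactly the facet normals of the Minkowski sum, and checking that $(w,c)$ is primitive on the lattice of $\aff(rP)$ so that the computed quantity is genuinely a lattice distance. These are the points where I would invest the most care.
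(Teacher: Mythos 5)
Your argument is correct, and its skeleton matches the paper's: both proofs rest on the interior lattice point of $rP$ decomposing as $x^\ast=(x_0,k)$ with $k\in\Z_{\ge1}^l$ and $x_0$ interior to $Q:=k_1P_1+\cdots+k_lP_l$ (the Cayley-trick slice), both measure the vertical facets $\{y_i=0\}$ by the primitive functional $e_i^\ast$ to get distance $k_i$, and both reduce the horizontal facets to facets of $Q$. Where you genuinely diverge is in the one delicate step, the primitivity bookkeeping for horizontal facets. The paper starts from the primitive facet normal $u=(u',u'')$ of $P$ and, in the \enquote{if} direction, must identify $u'$ with the primitive facet normal $g$ of $Q$: it uses reflexivity of $Q$ to write $u'=kg$ where $k$ is the distance, evaluates $u$ at vertices $(f_i,e_i)$ to see that $k$ divides every component of $u''$, and invokes primitivity of $u$ to force $k=1$. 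You run the construction in the opposite direction: starting from the primitive normal $w$ of $Q$ you build the explicit functional $(w,c)$ with $c_i=-\min_{P_i}\pro{w}{\cdot}$ and check it is primitive on the lattice of $\aff(rP)$ --- which does work, since the difference lattice of $\aff(rP)\cap\Z^{n+l}$ contains $\Z^n\times\{0\}$, where $(w,c)$ restricts to the primitive $w$. This buys a cleaner logical structure: your distance identity (the horizontal facet of $rP$ lies at distance $\pro{w}{x_0}-\min_Q\pro{w}{\cdot}$ from $x^\ast$, i.e., exactly the distance of the corresponding facet of $Q$ from $x_0$) holds \emph{unconditionally}, so a single dictionary serves both implications, whereas the paper uses two separate computations (a restriction argument for \enquote{only if}, the divisibility chain for \enquote{if}, with condition (1) woven into the latter). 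The price is that you must carry the full facet classification of the Cayley polytope, including the point you rightly flag: ruling out \enquote{mixed} facets supported by $w\ne0$ on a proper subfamily of blocks. That does go through --- for $w\ne0$ the $w$-minimal face of $\sum_{j\ne i}P_j$ has dimension at most $n-1$, so the Cayley polytope of the minimizing faces over $l-1$ blocks has dimension at most $n+l-3$ and is never a facet --- but it should be written out, since it is the one place where your route requires an argument the paper's facet-by-facet case split avoids. (One trivial remark: $e_i^\ast$ is primitive on the lattice of $\aff(rP)$ because $l\ge2$ whenever vertical facets exist; for $l=1$ there are none, so nothing is lost.)
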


\begin{proof}
Note that $\dim(P) = n + l -1$. Let $P$ be a Gorenstein polytope of codegree $r$. Hence, $rP$ is reflexive with respect to a unique interior lattice point $x := (x',(k_1, \ldots, k_l)) \in \Z^n \times \Z_{\ge 1}^l$ with $k_1 + \cdots + k_l=r$. Let $A$ denote the affine subspace $\R^n \times \{(k_1, \ldots, k_l)\}$. By the well-known Cayley trick (e.g., \cite[Sect.~9.2]{Triangulations}) we have
\begin{equation}
(r P) \cap  A = (k_1 P_1 + \cdots + k_l P_l) \times \{(k_1, \ldots, k_l)\}.
\tag{*}\label{trick}
\end{equation} Let $F'$ be a facet of $k_1 P_1 + \cdots + k_l P_l$. Then $F'$ is given as the intersection of a facet $F$ of $rP$ with $A$. As $r P$ is reflexive, $F$ has lattice distance one from $x$ given by an integral affine functional $u$. Restricting $u$ to $A$, we see that $F'$ also has lattice distance one from $x'$. Hence, (1) follows. Now, assume there exists $i \in \{1, \ldots, l\}$ such that $k_i \ge 2$ and $\dim(\sum_{j \not=i} P_j) = n$. We may assume $i=l$. Hence, $P_1 * \cdots * P_{l-1}$ is a facet of $P$. Note that $e^*_l$ restricts to a primitive integer affine functional on $\aff(rP)$ that evaluates to $0$ on the facet $r(P_1 * \cdots * P_{l-1})$ of $rP$ and to $k_l$ on 
$x$, thus, $r(P_1 * \cdots * P_{l-1})$ has lattice distance $k_l \ge 2$ from $x$ which is a contradiction to $rP$ being reflexive. This proves (2).

Conversely, let $(k_1, \ldots, k_l) \in \Z_{\ge 1}^l$ with $k_1 + \cdots + k_l = r$ satisfying (1) and (2). Let $x' \in \Z^n$ be the unique interior lattice point of $k_1 P_1 + \cdots + k_l P_l$, and $x := (x',(k_1, \ldots, k_l))$. Again by equation~\eqref{trick}, $x$ is an interior lattice point of $rP$. Let $F$ be a facet of $P$. There are two cases:

First, assume that there exists some $i \in \{1, \ldots, l\}$ such that $F$ does not contain a vertex in $P_i \times \{e_i\}$. We may assume $i=l$. Hence, $P_1 * \cdots * P_{l-1} = F$ is a facet of $P$, thus, has dimension $n+l-2$. Hence, $\dim(P_1 + \cdots + P_{l-1}) = n$. Condition (2) implies $k_l=1$. As above, it holds that $r(P_1 * \cdots * P_{l-1})$ has lattice distance $k_l$ from $x$, so $r F$ has lattice distance one from $x$.

Otherwise, we see that $F = F_1 * \cdots * F_l$ with non-empty faces $F_i$ of $P_i$ for $i=1, \ldots, l$. 
Let $u=(u',u'') \in (\Z^n)^* \times (\Z^l)^*$ be the unique primitive integral linear functional that evaluates to $0$ on $F$ and is nonnegative on $P$. 
As is well-known from the Cayley trick, we have $r F \cap (\R^n \times \{(k_1, \ldots, k_l)\}) = F' \times \{(k_1, \ldots, k_l)\}$ for some facet $F'$ of $k_1 P_1 + \cdots + k_l P_l$ (more precisely, $F'=k_1 F_1 + \cdots + k_l F_l$). Thus, $\pro{u}{F' \times \{(k_1, \ldots, k_l)\}}=0$. Let $b := \pro{u''}{(k_1, \ldots, k_l)} \in \Z$. Hence, $u'$, the restriction of $u$ onto $\Z^n$, evaluates on $F'$ to $-b$ and on $x'$ to $-b+k$ (for some $k \in \Z_{\ge 1}$). By condition (1) there exists a unique primitive integral linear functional $g \in (\Z^n)^*$ that evaluates on $F'$ to $c$ (for some $c \in \Z$) and on $x'$ to $c+1$. Therefore, there exists some $q \in \Z_{\ge 1}$ such that $u'=q g$. Hence, 
$qc=-b$ and $q(c+1)=-b+k$, which implies $q=k$. So we have $u'=k g$ and $-b=kc$. We write $u''=\sum_{i=1}^l c_i e^*_i$ where $c_1, \ldots, c_l \in \Z$. For $i=1, \ldots, l$ let $f_i$ be a vertex of $F_i$. Then $0 = \pro{u}{(f_i,e_i)} = \pro{u'}{f_i} + \pro{u''}{e_i} = k \pro{g}{f_i} + c_i$, thus, $k$ divides $c_1, \ldots, c_l$. As $u=(k g,u'')$ is a primitive lattice point, this implies $k=1$. Notice that, as $u'=g$, $c=-b$. From this, we get $\pro{u}{x} = \pro{u'}{x'} + \pro{u''}{(k_1, \ldots, k_l)} = -b+1+b=1$. Hence, the facet $rF$ has lattice distance one from $x$, as desired.

This shows that $rP$ is a reflexive polytope, hence, $P$ is a Gorenstein polytope of codegree $r$.

\end{proof}

\begin{exes}
Let us illustrate this result for $n=2$.

\begin{enumerate}
\item Let $P_1 := [0,1] \times \{0\}$ and $P_2 := \{0\} \times [0,1]$. Then $2 P_1 + 2 P_2$ is reflexive. Hence, $P_1 * P_2$ is a Gorenstein polytope of codegree $4$. In fact, $P_1*P_2 \cong \Delta_3$.
\item Let $P_1 := \Delta_2$ and $P_2 := \{0\} \times [0,1]$. Then $2 P_1 + P_2$ is reflexive. Here, $P_1 * P_2$ is a three-dimensional Gorenstein polytope of codegree $3$.
\item Let $P_1 := P_2 := \Delta_2$. Note that while $P_1 + 2P_2$ is reflexive, $P_1 * P_2$ is a lattice polytope of codegree $3$ but not a Gorenstein polytope (condition (2) is violated).
\end{enumerate}
\end{exes}

Let us recall the following notion from \cite{BB96} that has been intensively discussed in \cite{BN08}.

\begin{defn}
Let $P_1, \dotsc , P_l \subset \R^n$ be lattice polytopes. We say $P_1, \ldots, P_l$ is a {\em nef-partition} if $P_1 + \cdots + P_l$ is a reflexive polytope with unique interior lattice point $x$ and there exist lattice points $p_1 \in P_1, \ldots, p_l \in P_l$ such that $p_1 + \cdots + p_l = x$. In the case that $p_1=\cdots=p_l=x=0$, the nef-partition is called {\em centered}.
\end{defn}

The first part of the next result generalizes \cite[Prop.~6.16]{BN08}, where it was proven for centered nef-partitions. Crucial new ingredient is the nonnegativity of the mixed degree, introduced in \cite{Nil20}.

\begin{lem}
Let $P_1, \ldots, P_l \subset \R^n$ be lattice polytopes with $\dim(P_i) \ge 1$ for $i=1, \ldots, l$ such that $k_1 P_1 + \cdots + k_l P_l$ is reflexive (up to a translation) for $k_1, \ldots, k_l \in \Z_{\ge 1}$. Then we have $k_1 + \cdots + k_l \le 2n$. Moreover, if $k_1 + \cdots + k_l \ge n+2$ and, additionally, $\dim(\sum_{j \not=i} P_j) < n$ if $k_i \ge 2$ for $i \in \{1, \ldots, l\}$, then the Cayley polytope $P_1* \cdots * P_l$ is a reducible Gorenstein polytope.
\label{lem:mink-bound}
\end{lem}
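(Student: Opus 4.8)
The plan is to transfer everything to the reflexive polytope $Q := k_1 P_1 + \cdots + k_l P_l \subset \R^n$. Since dilation is repeated Minkowski addition, $k_i P_i$ is the Minkowski sum of $k_i$ copies of $P_i$, so $Q$ is a Minkowski sum of $r := k_1 + \cdots + k_l$ lattice polytopes, each of positive dimension; here the hypotheses $\dim(P_i) \ge 1$ and $\dim Q = n$ enter. After translating by a lattice vector we may assume that $Q$ is genuinely reflexive, with interior lattice point the origin. Both assertions of the lemma then become statements about this decomposition of $Q$ into $r$ positive-dimensional summands.

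For the bound $r \le 2n$ I would invoke the nonnegativity of the mixed degree \cite{Nil20} for the family consisting of the $r$ copies. In the present situation the nonnegativity of the mixed degree of a family whose Minkowski sum is reflexive translates into the statement that a reflexive polytope in $\R^n$ admits at most $2n$ positive-dimensional lattice summands; applied to our decomposition this yields $k_1 + \cdots + k_l = r \le 2n$. This generalizes \cite[Prop.~6.16]{BN08}, where the stronger hypothesis of a (centered) nef-partition made an elementary lattice-point count possible; dropping that hypothesis is exactly what forces the use of the mixed degree.

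For the refined statement, suppose in addition that $r \ge n+2$ together with the dimension condition in (2) of Proposition~\ref{prop:cayley}. That proposition, with the tuple $(k_1,\ldots,k_l)$ as witness, shows that $P := P_1 * \cdots * P_l$ is a Gorenstein polytope of codegree $r$. Note first that $r \ge n+2$ forces $l \ge 2$, since for $l=1$ a reflexive dilate $k_1 P_1$ would satisfy $k_1 = \codeg(P_1) \le n+1$. The decisive step is to produce, from the extremal analysis of the mixed degree in \cite{Nil20} (the boundary case $r = 2n$ and, more generally, the regime $r \ge n+2$ that strictly exceeds the irreducible configuration $\Delta_s \times \Delta_s$, for which $r = n+1$), a splitting of the family into two nonempty subfamilies whose Minkowski sums span complementary lattice subspaces $\R^n = V' \oplus V''$, that is, a lattice direct-product decomposition $Q \cong Q' \times Q''$ with $Q', Q''$ reflexive. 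Grouping identical copies, this induces a partition $\{1,\ldots,l\} = I \sqcup J$ into nonempty parts with $\sum_{i\in I} k_i P_i \cong Q'$ and $\sum_{j\in J} k_j P_j \cong Q''$.

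It then remains to read off reducibility. The decomposition gives $P \cong F * G$, where $F$ is the Cayley polytope of the $P_i$ with $i \in I$ and $G$ that of the $P_j$ with $j \in J$; writing $n' = \dim V'$ and $n'' = \dim V''$ one checks $\dim F + \dim G = (n'+|I|-1)+(n''+|J|-1) = n + l - 2 = \dim P - 1$, so $P$ is a Cayley join of $F$ and $G$. Applying Proposition~\ref{prop:cayley} to each subfamily separately — condition (1) holds since $Q'$ and $Q''$ are reflexive, and condition (2) descends because each excluded sum loses exactly the full complementary dimension — shows that $F$ and $G$ are Gorenstein of codegrees $\sum_{i\in I} k_i$ and $\sum_{j\in J} k_j$, which add up to $r = \codeg(P)$. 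By Definition~\ref{next1} this is precisely reducibility. The main obstacle is the splitting step: everything else is bookkeeping with Proposition~\ref{prop:cayley}, whereas extracting the direct-product decomposition of $Q$ in the range $r \ge n+2$ is the genuine geometric content, supplied by the nonnegativity and extremal characterization of the mixed degree.
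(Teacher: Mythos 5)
Your high-level skeleton matches the paper's (mixed-degree nonnegativity, a splitting of $\R^n$, then bookkeeping with Proposition~\ref{prop:cayley}), but the two steps you declare to be supplied by \cite{Nil20} are not, and they are exactly where the work lies. Nonnegativity of the mixed degree \cite[Prop.~4]{Nil20} only says the mixed codegree is at most $n+1$; applied to your family of $r$ copies it yields $r \le n+1$, and \emph{only} in the case that every proper sub-sum $Q_I := \sum_{i \in I} Q_i$ is hollow. Asserting that nonnegativity ``translates into'' the statement that a reflexive polytope admits at most $2n$ positive-dimensional lattice summands is circular --- that statement \emph{is} the first claim of the lemma. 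The paper closes the remaining case by induction on $n$: if some proper $Q_I$ is non-hollow, then \cite[Prop.~6.13]{BN08} shows that $Q_I$ and $Q_J$ (with $J$ the complementary index set) form a nef-partition, both reflexive in their ambient lattices, \cite[Prop.~6.11]{BN08} gives the splitting $\lin(Q_I - q) \oplus_\R \lin(Q_J - q') = \R^n$, hence $n = \dim(Q_I) + \dim(Q_J)$ with both dimensions strictly between $0$ and $n$, and the induction hypothesis applied to each side yields $r = |I| + |J| \le 2n$. Likewise, your splitting in the regime $r \ge n+2$ does not come from any ``extremal characterization'' of the mixed degree: since $r \ge n+2 > n+1$, some proper $Q_I$ must be non-hollow, and the same two propositions of \cite{BN08} hand you the centered nef-partition and the direct-sum decomposition directly; no boundary-case analysis of $r = 2n$ occurs anywhere.

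The second genuine gap is the phrase ``grouping identical copies, this induces a partition $\{1,\ldots,l\} = I \sqcup J$.'' The splitting partitions the $r$ \emph{copies}, and a priori the $k_h$ copies of a single $P_h$ can land on both sides, in which case no partition of $\{1,\ldots,l\}$ is induced at all. The paper rules this out with a dedicated argument: if $Q_t = P_h = Q_{t'}$ with $t \in I$ and $t' \in J$, write points of $\R^n$ as $(x_1,x_2)$ in the splitting; every point of $Q_I - q$ has vanishing second coordinate and every point of $Q_J - q'$ has vanishing first coordinate, so choosing arbitrary points in the remaining summands pins down both coordinates of every $v$ in (a translate of) $P_h$, forcing $\dim(P_h) = 0$ and contradicting $\dim(P_i) \ge 1$. (One also needs the preliminary observation that neither subfamily can contain all of $P_1,\ldots,P_l$, else the other side would be a point.) Without this step the factors $F$ and $G$ need not exist as Cayley polytopes of subfamilies of the original $P_i$. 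Your remaining bookkeeping --- the dimension count showing $P$ is a Cayley join of $F$ and $G$, the verification that condition (2) of Proposition~\ref{prop:cayley} descends to each subfamily, and the additivity of the codegrees --- does agree with the paper.
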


\begin{proof}

Let us prove the first statement by induction on the dimension $n$. Clearly, it holds for $n \le 1$, so we may assume $n \ge 2$. 
Let $r := k_1 + \cdots + k_l$. We denote by $Q_1, \ldots, Q_r$ the family $P_1, \ldots, P_1, \ldots, P_l, \ldots, P_l$ where for $i=1, \ldots, l$ each $P_i$ occurs with multiplicity $k_i$. 
In the following, we use for the Minkowski sum the notation $Q_I := \sum_{i \in I} Q_i$ for $\emptyset \not= I \subseteq [r]$. We also set $[r] := \{1, \ldots, r\}$. 

Let us first assume that $Q_I$ is hollow for all $\emptyset \not= I \subsetneq [r]$. In the notation of [Nil20] this means that the mixed codegree of the family $Q_1, \ldots, Q_r$ is at least $r$. On the other hand, the nonnegativity of the mixed degree \cite[Prop.~4]{Nil20} implies that the mixed codegree is at most $n+1$. Hence, $r \le n+1 \le 2n$. 

Otherwise, there exists a subset $\emptyset \not= I \subsetneq [r]$ such that $Q_I$ is not hollow. Let $J := [r] \setminus I$. As $Q_{[r]} = Q_I + Q_J = k_1 P_1 + \cdots + k_l P_l$ is reflexive, it follows from \cite[Prop.~6.13]{BN08} that $Q_I$ and $Q_J$ form a nef-partition, and $Q_I$ and $Q_J$ are reflexive in their ambient lattices with respect to their relative interior lattice points $q \in Q_I$ and $q' \in Q_J$. Applying \cite[Prop.~6.11]{BN08} to the (so-called reducible) centered nef-partition $Q_I - q$ and $Q_J-q'$, we see that 
\[\lin(Q_I-q) \oplus_\R \lin(Q_J-q') = \R^n.\] In particular, we have $n = \dim(Q_I) + \dim(Q_J)$. As by our assumption the dimensions of $Q_I$ and $Q_J$ are at least one, we see that they are strictly smaller than $n$. Therefore, induction implies that $r = |I| + |J| \le 2 \dim(Q_I) + 2 \dim(Q_J) =  2 n$.

For the additional statement, let $r \ge n+2$, in particular, $n \ge 2$. We follow the above argument and deduce in the previous notation that there exists a subset $\emptyset \not= I \subsetneq [r]$, $J := [r] \setminus I$, such that $Q_I-q$ and $Q_J-q'$ is a centered nef-partition. 

Assume first that the family $\{Q_i\}_{i \in I}$ contains each $P_1, \ldots, P_l$ at least once. In this case, $\dim(Q_I) = \dim(P_1 + \cdots + P_l) = n$, thus, $\dim(Q_J)=0$, a contradiction. By the same argument applied to $\{Q_j\}_{j \in J}$, we may assume without loss of generality that $1 \in I$ and $r \in J$ such that $Q_1=P_1$ is not in $\{Q_j\}_{j \in J}$ and $Q_r=P_l$ is not in $\{Q_i\}_{i \in I}$. Denote the family $Q_1-q, Q_2, \ldots, Q_{r-1},Q_r-q'$ by $R_1, \ldots, R_r$. Note that $R_I = Q_I-q$ and $R_J = Q_J - q'$. Let us introduce coordinates $x=(x_1,x_2) \in \R^n$ for the $\R$-splitting $\lin(R_I) \oplus_\R \lin(R_J) = \R^n$.

Assume next that there exists some $t \in \{2, \ldots, r-1\}$ such that $Q_t$ appears in $\{Q_i\}_{i \in I}$ and $\{Q_j\}_{j \in J}$. Without loss of generality let $t \in I$ and $Q_t=P_h$ for $h \in \{1, \ldots, l\}\setminus\{1,l\}$. 
Then there exists $t' \in J \cap \{2, \ldots, r-1\}$ with $R_t=Q_t = P_h=Q_{t'}=R_{t'}$. Choose $v \in R_t=R_{t'}$, $\{r_i\}_{i \in I\setminus\{t\}} \in \{R_i\}_{i \in I\setminus\{t\}}$ and $\{r'_j\}_{j \in J\setminus\{t'\}} \in \{R_j\}_{j \in J\setminus\{t'\}}$. We have $\left(v+\sum_{i \in I\setminus\{t\}} r_i\right)_2=0$ and $\left(v+\sum_{j \in J\setminus\{t'\}} r'_j\right)_1=0$. In particular, $v_1 = - \left(\sum_{j \in J\setminus\{t'\}} r'_j\right)_1$ and $v_2 = - \left(\sum_{i \in I\setminus\{t\}} r_i\right)_2$. (Note that we do not claim that $(v_1,v_2)=(0,0)$.) As this holds for any choice of $v=(v_1,v_2) \in R_t$, this implies $\dim(R_t)=0$, a contradiction. 

This shows that for $i =1, \ldots, l$ the lattice polytope $P_i$ appears $k_i$-times in $\{Q_i\}_{i \in I}$ and not in $\{Q_j\}_{j \in J}$, or vice versa. Thus we may assume without loss of generality that there is $c \in \{1, \ldots, l-1\}$ such that $k_1 P_1 +  \cdots + k_c P_c = Q_I$ and $k_{c+1} P_{c+1} + \cdots + k_l P_l=Q_J$ are reflexive in their ambient lattices. 

By our assumptions, Proposition~\ref{prop:cayley} implies that $P := P_1 * \cdots * P_l$ is a Gorenstein polytope of codegree $r$. Let us consider the faces $F := P_1 * \cdots * P_c$ and $G := P_{c+1} * \cdots * P_l$ of $P$. As we have $\dim(F) = \dim(Q_I) + c - 1$, 
$\dim(G) = \dim(Q_J) + l-c-1$ and $\dim(P) = n + l - 1$, we get $\dim(F) + \dim(G)  = \dim(P) - 1$. Hence, $P$ is a Cayley join of $F$ and $G$.

If $k_1 \ge 2$ and $\dim(\sum_{i=2}^c P_i) = \dim(P_1 + \cdots + P_c)$, then also $\dim(\sum_{i=2}^l P_i) = \dim(P_1 + \cdots + P_l)$, a contradiction. Hence, applying 
Proposition~\ref{prop:cayley} we get that $F$ is a Gorenstein polytope of codegree $k_1 + \cdots + k_c$. Analogously, we get that $G$ is a Gorenstein polytope of codegree $k_{c+1} + \cdots + k_l$. Hence, we get $\codeg(F) + \codeg(G) = k_1 + \cdots + k_l = r$, again by Proposition~\ref{prop:cayley}. Now, the desired statement follows from Definition~\ref{next1}.
\end{proof}

Let us also note the following reduction that can be shown precisely as in the proof of Proposition~\ref{implication}.

\begin{lem} 
It suffices to prove Theorem~\ref{BJ-conj} for irreducible Gorenstein polytopes.
\label{lem:irr}
\end{lem}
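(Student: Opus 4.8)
The plan is to mirror the argument of Proposition~\ref{implication} almost verbatim, replacing the appeal to Conjecture~\ref{conj-new} by a case distinction on whether $P$ is irreducible. Concretely, I would assume that Theorem~\ref{BJ-conj} has already been established for every \emph{irreducible} Gorenstein polytope (in all dimensions), and from this deduce the theorem for arbitrary Gorenstein polytopes by induction on the dimension $d$. The base case $d \le 1$ is immediate, so the standing inductive hypothesis is that Theorem~\ref{BJ-conj} holds for every Gorenstein polytope of dimension strictly less than $d$.

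For the inductive step, let $P$ be a Gorenstein polytope of dimension $d$ and degree $s$ with $d \ge 3s$. If $P$ is irreducible, the conclusion is exactly the assumed irreducible case, so $P$ is a lattice pyramid and we are done. The only remaining possibility is that $P$ is reducible with factors $F$ and $G$. Here $F$ and $G$ are again Gorenstein polytopes by Definition~\ref{next1}, with $\dim(F)+\dim(G) = d-1$ (since $P$ is a Cayley join of $F$ and $G$) and $\deg(F)+\deg(G) = s$. If either factor were a lattice pyramid, then Lemma~\ref{next2} would force $P$ itself to be a lattice pyramid, so I may assume that neither $F$ nor $G$ is a lattice pyramid; in particular their degrees are positive.

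The dimension count then finishes the argument precisely as in Proposition~\ref{implication}. Since $\dim(F),\dim(G) < d$, the inductive hypothesis applies to both factors, and because neither is a lattice pyramid each must have dimension strictly below the pyramid threshold, i.e., $\dim(F) \le 3\deg(F)-1$ and $\dim(G) \le 3\deg(G)-1$. Summing gives $d = \dim(F)+\dim(G)+1 \le 3(\deg(F)+\deg(G))-1 = 3s-1$, contradicting $d \ge 3s$. Hence the reducible case with non-pyramidal factors cannot occur, and in every case $P$ is a lattice pyramid, completing the induction.

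I expect no genuine obstacle: the content is the same elementary induction as in Proposition~\ref{implication}, and every ingredient—additivity of dimension and degree over factors, the Gorenstein property of the factors (Definition~\ref{next1}), and the pyramid-from-factor implication (Lemma~\ref{next2})—is already available. The only point deserving a moment's care is the logical bookkeeping of the induction: the irreducible instances are supplied as a hypothesis rather than proved, so the induction only ever feeds back \emph{lower-dimensional} instances of the full statement, while the irreducible case at dimension $d$ is discharged directly by assumption.
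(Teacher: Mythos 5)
Your proposal is correct and is exactly the paper's intended argument: the paper states that Lemma~\ref{lem:irr} ``can be shown precisely as in the proof of Proposition~\ref{implication}'', which is the induction on dimension you carry out, with the appeal to Conjecture~\ref{conj-new} replaced by the assumed irreducible case. Your handling of the bookkeeping (irreducible instances assumed in all dimensions, induction feeding back only lower-dimensional full instances, non-pyramidal factors having positive degree via Lemma~\ref{next2}) is accurate.
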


Now, we can give the proof of the Batyrev-Juny conjecture.

\begin{proof}[Proofs of Theorem~\ref{BJ-conj}]

Let $d \ge 3s$, and $s > 0$. By Lemma~\ref{lem:irr}, we may assume that $P$ is an irreducible Gorenstein polytope. By Theorem~\ref{thm:cayley}, there exist $P_1, \ldots, P_l \subset \R^n$ such that $P$ is isomorphic to $P_1 * \cdots * P_l$ with $n = \dim(P_1 + \cdots + P_l) \le 2s-1$ and $d+1=l+n$. As $P$ is a Gorenstein polytope of codegree $d+1-s$, by Proposition~\ref{prop:cayley} there exists $(k_1, \ldots, k_l) \in \Z_{\ge 1}^l$ with $k_1 + \cdots + k_l = d+1-s$ such that $k_1 P_1 + \cdots + k_l P_l \subset \R^n$ is reflexive, 
and $\dim(\sum_{j \not=i} P_j) < n$ if $k_i \ge 2$ for $i \in \{1, \ldots, l\}$. Let us assume that $P$ is not a lattice pyramid. In this case, any $P_1, \ldots, P_l$ has dimension at least one. Therefore, Lemma~\ref{lem:mink-bound} implies $d+1-s = k_1 + \cdots + k_l \le n+1$, hence, $d \le n +s \le 2s-1+ s = 3s-1$, a contradiction.
\end{proof}

Finally, here is the proof of the refined conjecture for IDP Gorenstein polytopes. For the proof, we need the notion of the dual Gorenstein polytope. We refer to \cite{BN08} and \cite{NS13} for its definition and basic properties.

\begin{proof}[Proof of Proposition~\ref{IDP}]

Let $P$ be IDP with $d > 2s$. The dual Gorenstein polytope $P^\times$ has also degree $s$ and codegree $d+1-s$. By \cite[Cor.~2.12]{BN08}, there exist $P_1, \ldots, P_l \subset \R^n$ such that $P^\times$ is isomorphic to $P_1 * \cdots * P_l$ with $l = d+1-s$ and $d=n+l-1$. Hence, $n=d+1-l=s$. Proposition~\ref{cayley-old} implies that $P_1 + \cdots + P_l$ is reflexive. Let us assume that $P$ is irreducible, hence, also $P^\times$ is irreducible (\cite[Def.~4.13]{NS13}). In particular, $\dim(P_i) \ge 1$ for $i=1, \ldots, l$. In this case, Lemma~\ref{lem:mink-bound} (with $k_1=\cdots=k_l=1$) implies $l \le s+1$, thus, $d = l+s-1\le 2s$, a contradiction.
\end{proof}

\subsection*{Acknowledgments}

The author is indebted to Christopher Borger for a crucial conversation regarding Proposition~\ref{prop:cayley}. BN has been PI in the Research Training Group Mathematical Complexity Reduction funded by the Deutsche Forschungsgemeinschaft (DFG, German Research Foundation) - 314838170, GRK 2297 MathCoRe.

\bibliographystyle{acm}
\bibliography{bibliography}

\end{document}